\documentclass[reqno,english]{amsart}

\usepackage{amsmath,amsfonts,amssymb,graphicx,amsthm,enumerate,url}
\usepackage[noadjust]{cite}
\usepackage{stmaryrd}
\usepackage{comment,paralist,mathrsfs}
\usepackage{mathrsfs,booktabs,tabularx}
\usepackage{xifthen,xcolor,tikz,setspace}
\usetikzlibrary{decorations.pathmorphing,patterns,shapes,calc,decorations}
\usetikzlibrary{decorations.pathreplacing}
\usepackage{mathtools}
\usepackage[small]{caption}
\usepackage{subcaption}

\usepackage[colorinlistoftodos]{todonotes}
\usepackage[colorlinks=true]{hyperref}

\numberwithin{equation}{section}

\newcommand{\bin}{\operatorname{Bin}}

\renewcommand{\epsilon}{\varepsilon}

\usepackage{color}

\newtheorem{theorem}{Theorem}[section]
\newtheorem*{theorem*}{Theorem}
\newtheorem{lemma}[theorem]{Lemma}

\newtheorem{conj}[theorem]{Conjecture}
\newtheorem{claim}[theorem]{Claim}
\newtheorem{proposition}[theorem]{Proposition}

\newtheorem{observation}[theorem]{Observation}
\newtheorem*{observation*}{Observation}

\newtheorem{corollary}[theorem]{Corollary}

\newtheorem{remark}[theorem]{Remark}

\theoremstyle{definition}{
\newtheorem{example}[theorem]{Example}

\newtheorem*{definition*}{Definition}

}

\newcommand{\R}{\mathbb R}

\newcommand{\cD}{\mathcal D}
\newcommand{\cG}{\mathcal G}

\newcommand{\sC}{\mathsf C}

\newcommand{\bv}{\mathbf r}

\newcommand{\p}{{\bf p}}

\begin{document}

\title{Sharp threshold for rigidity of random graphs}
\author{Alan Lew}\address{
Einstein Institute of Mathematics\\
 Hebrew University\\ Jerusalem~91904\\ Israel.}
\email{alan.lew@mail.huji.ac.il}
\author{Eran Nevo }
\address{Einstein Institute of Mathematics\\
 Hebrew University\\ Jerusalem~91904\\ Israel.}
\email{nevo@math.huji.ac.il}
\author{Yuval Peled}
\address{Einstein Institute of Mathematics\\
 Hebrew University\\ Jerusalem~91904\\ Israel.}
\email{yuval.peled@mail.huji.ac.il}
\author{Orit E. Raz}
\address{Einstein Institute of Mathematics\\
 Hebrew University\\ Jerusalem~91904\\ Israel.}
\email{oritraz@mail.huji.ac.il}

\begin{abstract}
We consider the Erd\H{o}s-R\'enyi evolution of random graphs, where a new uniformly distributed edge is added to the graph in every step. For every fixed $d\ge 1$, we show that with high probability, the graph becomes rigid in $\R^d$ at the very moment its minimum degree becomes $d$, and it becomes globally rigid in $\R^d$ at the very moment its minimum degree becomes $d+1$.
\end{abstract}
\maketitle
\section{Introduction}\label{sec:Intro}
A \emph{$d$-dimensional framework} is a pair $(G,\p)$ consisting of a finite simple graph $G=(V,E)$ and an embedding $\p$ of its vertices in $\R^d$. A framework is called \emph{rigid} if every continuous motion of the vertices in $\R^d$ that starts at $\p$, and preserves the lengths of all the edges of $G$, does not change the distance between any two vertices of $G$. In general, determining whether a framework is rigid is a hard problem that may depend on the particular embedding $\p$. However, if the embedding is generic, i.e., if the $d|V|$ coordinates are algebraically independent over the rationals, rigidity depends only on the underlying graph, and is, in fact, equivalent to the stronger notion of {\em infinitesimal rigidity} introduced by Asimow and Roth~\cite{AR1,AR2} (see ~\cite{graver1993book} for a comprehensive exposition of these concepts).

Infinitesimal rigidity is defined using the $|E|\times d|V|$ {\em rigidity matrix} $R(G,\p)$ which represents the partial derivatives of the square-distances between adjacent vertices, with respect to a vertex motion in $\R^d$ starting at $\p$. Namely, the columns of $R(G,\p)$ are indexed by associating $d$ coordinates for each vertex of $V$, and the rows are indexed by the edge set $E$. The row vector $\bv_{uv}$ indexed by an edge $uv\in E$ is supported on the coordinates of $u$ and $v$, where it is equal to the $d$-dimensional row vectors $\p(u)-\p(v)$ and $\p(v)-\p(u)$ respectively. 
Suppose that the affine hull of $\{\p(v):v\in V\}$ is $d$-dimensional. Then, using the isometries of $\R^d$, one can construct $\binom {d+1}2$ linearly independent vectors in the right kernel of $R(G,\p)$, whence its rank is at most $d|V| - \binom{d+1}2$. 
A framework $(G, \p)$ is said to be \emph{infinitesimally rigid} if this bound is attained, i.e., if the rank of its rigidity matrix equals $d|V|-\binom{d+1}{2}$, or, equivalently, if its right kernel consists only of vectors that are derived by the isometries of $\R^d$. A graph $G$ is called \emph{rigid in $\R^d$} (or, \emph{$d$-rigid}, for short) if it is infinitesimally rigid with respect to some embedding. 
An embedding $\p$ is called {\em generic} if its coordinates are algebraically independent over $\mathbb Q$. In such a case, $\p$ maximizes the rank of $R(G,\p)$ for every graph $G$ on the vertex set $V$, hence the rank of the rigidity matrix $R(G,\p)$, and $d$-rigidity in particular, depends only on the combinatorial structure of the graph $G$.



    The study of structural rigidity in modern mathematics goes back famously to Cauchy's rigidity theorem (see e.g. ~\cite{PROOFS_BOOK}). In the last 50 years, starting from the works of Laman~\cite{Laman} (see~\cite{firstLaman} already in 1927) and Asimow and Roth~\cite{AR1, AR2}, the mathematical problem of characterizing $d$-rigid graphs has been studied extensively, see e.g. the textbook~\cite{graver1993book} and survey~\cite{Connelly:RigiditySurvey}. In addition, rigidity (especially in low dimensions $d=2,3$) is investigated in many application areas, such as network localization~\cite{1354686rdo},  combinatorial algorithms~\cite{jacobs_algorithm,lee08}, computational biology~\cite{jacobs_protein}, structural engineering, robot motion planning, and more (see e.g. the  survey~\cite{MR3699772}).

Note that $d$-rigidity is a monotone graph property, namely if $G$ is $d$-rigid and $e$ is a two-subset of $V$, then $G\cup \{e\}$ is also $d$-rigid; indeed, for any embedding $\p$, the rank of the corresponding rigidity matrix $R(G,\p)$ can only increase by adding an extra row. 
The study of threshold probabilities of monotone properties is one of the main themes in random graph theory. Consider the Erd\H{o}s-R\'enyi $\cG(n,p)$ model of $n$-vertex random graphs where each edge appears independently with probability $p$. It is known ~\cite{Bollobas-book-RandomGraphs} that for every non-trivial monotone graph property $\mathcal P$ there is a threshold probability $p_*=p_*(n)$ such that asymptotically almost surely (a.a.s.) --- with probability tending to $1$ as $n\longrightarrow\infty$ --- $\cG(n,p)$ does not have property $\mathcal P$ if $p/p_* \to 0$, but it does have the property if $p/p_* \to \infty$. The threshold probability is called {\em sharp} if for every $\varepsilon>0$, a.a.s. $\cG(n,(1-\varepsilon)p_*)$ does not have property $\mathcal P$, but $\cG(n,(1+\varepsilon)p_*)$ does have it.

The following natural questions arise:  What is the threshold probability for $d$-rigidity of $\cG(n,p)$? Is there a sharp threshold probability?
For $d=1$, the notion of graph rigidity turns out to coincide with that of graph connectivity. In $\cG(n,p)$, the latter was studied in a seminal paper of Erd\H{o}s and Re\'nyi~\cite{MR120167}, who determined a sharp threshold probability of $p=\log n/n$. Bollob\'as and Thomason~\cite{BT85} refined this to a {\em hitting-time} result and proved that in the evolution of random graphs (formally defined below), connectivity occurs exactly when the graph contains no isolated vertices.

It is a well known (easy) fact that, for every $d\ge 1$, a graph $G$  must have minimum degree at least $d$ to be $d$-rigid. 
Inspired by the 1-dimensional case, it is plausible to guess that the threshold probability for $d$-rigidity of $\cG(n,p)$ coincides with the known threshold of $p=(\log n+(d-1)\log\log n)/n$ for having minimum degree $d$  (see ~\cite{Bollobas-book-RandomGraphs}). 
For $d=2$, Jackson, Servatius and Servatius~\cite[Thm.4.4]{JSS-planeThreshold} proved that if $np>{\log n + \log\log n + \omega(n)}$, where $\omega(n)\longrightarrow\infty$ as $n\to \infty$, then a.a.s. $G\in\cG(n,p)$ is $2$-rigid\footnote{There appears a typo in~\cite{JSS-planeThreshold}, writing the weaker constant $2$ instead of $1$ in front of the $\log\log n$ term.}, supporting this ``guess".  For their proof they use the fact that a 6-connected graph is rigid in $\R^2$, a fact which is a consequence of a characterization of Lov\'{a}sz and Yemini~\cite{Lovasz-Yenimi} for 2-rigidity, which has no analogues in higher dimensions. For $d>2$, it was proved by Kir\'{a}ly, Theran, and Tomioka~\cite[Thm.4]{Kiraly-Theran:RigitidyThreshold} that there exists a constant $c_d>0$ such that if $np>{c_d\log n}$ then a.a.s. $G\in\cG(n,p)$ is $d$-rigid. The estimate for the constant $c_d$ was improved by Jord\'an and Tanigawa~\cite[Thm.8.5]{Jordan-Tanigawa:RigitidyThreshold}, yet yielded $c_d > d$. 
Thus, even the question whether $d$-rigidity has a \emph{sharp} threshold (at $p={\log n}/{n}$) was left open for all $d>2$.

In this paper we prove that the intuition from the case $d=1$ indeed extends to higher dimensions in the strongest sense. That is, for every $d\ge 1$, in the Erd\H{o}s-R\'enyi evolution of random graphs, $d$-rigidity occurs exactly when the graph ceases to contain a vertex of degree less than $d$. 

Formally, let $\cG(n)=\{G(n,M) ~:~0\le M\le \binom n2\}$, where $G(n,0)$ is the empty $n$-vertex graph, and $G(n,M) = G(n,M-1)\cup\{e\}$ for an edge $e$ that is sampled uniformly from the edges in the complement of $G(n,M-1)$. Fix $d\ge 1$, and consider the random variables 
\begin{align*}
M_{d-\rm rigid}&:=\min\{M: G(n,M)\text{ is $d$-rigid}\}\quad\text{  and}\\
M_d&:=\min\{M:\delta(G(n,M))=d\},
\end{align*}
where $\delta(G)$ denotes the minimum degree of a graph $G$.
Note that $G(n,M_d-1)$ is not $d$-rigid whence $M_{d-\rm rigid}\ge M_d$ holds deterministically. The main result of this paper asserts that the converse inequality a.a.s.\, holds.
\begin{theorem}\label{thm:1} For every $d\geq 1$, a.a.s.\, $M_{d-\rm rigid}= M_d$.
\end{theorem}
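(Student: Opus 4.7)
The direction $M_{d-\rm rigid}\ge M_d$ holds deterministically, so the content of the theorem is the bound $M_{d-\rm rigid}\le M_d$ a.a.s. My main tool is the standard \emph{$0$-extension lemma} for generic $d$-rigidity: if $H$ is $d$-rigid on at least $d$ vertices and $v\notin V(H)$ has at least $d$ neighbors in $V(H)$, then attaching $v$ to $H$ via those edges yields a $d$-rigid graph; indeed, in a generic embedding, the $\ge d$ new rows of the rigidity matrix span the full $d$-dimensional block of new columns corresponding to $v$, raising the rank by the required amount. The plan is to exhibit, at time $M_d$, a ``core'' $C\subseteq V$ such that (I) $G(n,M_d)[C]$ is $d$-rigid, and (II) every $v\in V\setminus C$ has at least $d$ neighbors in $C$; iterating the $0$-extension lemma over $V\setminus C$ then yields $d$-rigidity of $G(n,M_d)$.

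To define the core, fix a (possibly slowly growing) threshold $K$ and set $L:=\{v:\deg_{G(n,M_d)}(v)\le K\}$, $C:=V\setminus L$. Standard first- and second-moment calculations for $\cG(n,p)$ at $p\approx(\log n+(d-1)\log\log n)/n$, together with the sharp concentration of $M_d$ around this density, should give a.a.s.\ that (a)~$|L|$ is of at most polylogarithmic size, and (b)~no two vertices of $L$ share a common neighbor in $G(n,M_d)$. Since $\delta(G(n,M_d))=d$, property~(b) implies that every $v\in L$ has all of its $\ge d$ neighbors inside $C$, verifying~(II).

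The crux of the proof is establishing~(I). This cannot be read off from the results of Kir\'aly--Theran--Tomioka~\cite{Kiraly-Theran:RigitidyThreshold} or Jord\'an--Tanigawa~\cite{Jordan-Tanigawa:RigitidyThreshold}, which require density $c_d\log n$ with $c_d>d$, whereas $G(n,M_d)[C]$ has density only $(1+o(1))\log n$. I anticipate the proof to proceed via a ``minimum-degree-robust'' rigidity statement at the sharp threshold, most naturally obtained by a two-round exposure: reveal a first batch of slightly sub-threshold edges and run a recursive or matroidal rigidity argument to extract a $d$-rigid subset $C_0\subseteq C$; then use the remaining sprinkled edges to show that each $v\in C\setminus C_0$ has at least $d$ neighbors in the growing rigid set and can be absorbed via further $0$-extensions. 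Pushing such an argument down to the sharp density $\log n/n$ is the central difficulty, and presumably requires a new structural lemma, possibly inductive in $d$ or matroidal in nature, specifically tailored to the generic $d$-rigidity matroid; this is the place where the paper must do genuinely new work.

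Granting (I) and (II), enumerate $L=\{v_1,\ldots,v_t\}$ in any order, set $H_0:=G(n,M_d)[C]$, and inductively define $H_i:=H_{i-1}\cup\{v_i\}$ together with the $G(n,M_d)$-edges from $v_i$ to $V(H_{i-1})$. By (II), each $v_i$ has at least $d$ neighbors in $C\subseteq V(H_{i-1})$, so the $0$-extension lemma gives that $H_i$ is $d$-rigid. Hence $H_t=G(n,M_d)$ is $d$-rigid, yielding $M_{d-\rm rigid}\le M_d$ a.a.s., as required.
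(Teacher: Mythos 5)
The direction $M_{d\text{-rigid}}\ge M_d$ is indeed trivial, and your $0$-extension lemma is essentially the paper's Observation~\ref{fact:K_minus}, so the machinery you would use to \emph{grow} a rigid set is sound. The genuine gap is in step (I), precisely where you flag it: you reduce the problem to showing that $G(n,M_d)[C]$ is $d$-rigid for the high-degree core $C$, but this reduction makes essentially no progress. Removing the (polylogarithmically many) low-degree vertices leaves a graph on $(1-o(1))n$ vertices with density still $\approx\log n/n$, sitting squarely in the critical window where Kir\'aly--Theran--Tomioka and Jord\'an--Tanigawa do not apply. You offer a two-round exposure sketch, but you do not say how to extract any $d$-rigid set $C_0$ of positive density from a first round of sub-threshold edges, and a naive second-round sprinkling supplies only $o(n\log n)$ additional edges --- too few to bootstrap a small seed to all of $C$. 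Without a concrete mechanism for step (I), the proof is not there; you have correctly identified the bottleneck but not resolved it.

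The paper avoids this bottleneck entirely by working in the rigidity \emph{closure} $\sC_d$ rather than with rigid subgraphs. Two facts do the work. First (Proposition~\ref{lem:component}), already at $M=10d^2n$ edges --- far below $M_d\approx\tfrac12 n\log n$ --- the closure $\sC_d(G(n,M))$ a.a.s.\ contains a clique on $\ge 5n/9$ vertices; this is proved by a clean coupling argument (Lemma~\ref{lem:closure}) showing that at most $dn-\binom{d+1}{2}$ of the $cn$ inserted edges can fall outside the current closure, so the closure must have $(1-\delta)\binom n2$ edges, which forces a dense subset on which a greedy argument finds a clique via Observation~\ref{fact:K_minus}. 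Crucially, this step requires no density near the sharp threshold, so it never has to confront the difficulty you face with the core. Second (Proposition~\ref{lem:expansion}), at time $M_d$ the graph has the strong expansion property that \emph{every} set $B$ with $1\le|B|\le n/2$ contains a vertex with $\ge d$ neighbors outside $B$. Combining these, if the maximal clique $A$ in $\sC_d(G(n,M_d))$ were a proper subset, then $B=[n]\setminus A$ would contain a vertex with $d$ neighbors in $A$, which by the $K_{d+2}^-$ observation would enlarge the clique --- contradiction. So $A=[n]$. Note also that the expansion property needed is substantially stronger than your ``no two low-degree vertices share a neighbor,'' because in the paper's bootstrap the set $B$ being shrunk is not a priori the set of low-degree vertices. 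In short, the paper's insight that ``a giant clique in the closure'' is far easier to produce than ``a giant rigid induced subgraph'' (see Example~\ref{example:compVSclique} for the distinction) is exactly what your sketch is missing.
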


The proof is inspired by the argument in ~\cite{LuzP} on the vanishing of the integral homology of random $2$-complexes and is given in Sections~\ref{sec:Pr}, \ref{sec:2.2-Giant} and \ref{sec:2.3-Expansion}.


Using the contiguity of the random graph models $\cG(n,p)$ and $G(n,M)$, as well as known accurate estimates on the minimum degree of $\cG(n,p)$ (see \cite{Bollobas-book-RandomGraphs}), Theorem~\ref{thm:1} readily implies that the sharp threshold probability for $d$-rigidity of $\cG(n,p)$ coincides with the sharp threshold probability 
for having minimum degree $\delta(\cG(n,p))=d$:

\begin{corollary}
For every $d\ge 1$ and function $\omega(n)\longrightarrow\infty$ as $n\to \infty$:

(i) if $np>{\log n + (d-1) \log\log n + \omega(n)}$ then  $G\in\cG(n,p)$ is a.a.s. $d$-rigid, and

(ii) if $np<{\log n + (d-1) \log\log n -\omega(n)}$ then $G\in\cG(n,p)$ is a.a.s. not $d$-rigid.

Moreover, for every $c\in\R$, if $np=\log n +(d-1)\log\log n+c$, the probability that $G\in\cG(n,p)$ is $d$-rigid tends to $e^{-e^{-c}/(d-1)!}$ as $n\longrightarrow\infty$.
\end{corollary}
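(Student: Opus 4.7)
The plan is to deduce the Corollary from Theorem~\ref{thm:1} by coupling $\cG(n,p)$ to the random graph process $\cG(n)$ and then invoking the classical Poisson-type limit theorem for the minimum degree of $\cG(n,p)$. Concretely, I would couple $G\sim\cG(n,p)$ with an instance of $\cG(n)$ by setting $M:=|E(G)|$ and, conditional on $M$, ordering the edges of $G$ uniformly at random before appending the non-edges in a uniform random order; the prefixes of this ordering realize the process with $G(n,M)=G$. Since both $d$-rigidity and the property $\{\delta\geq d\}$ are monotone, under this coupling
\[
\bigl\{G\text{ is }d\text{-rigid}\bigr\}=\{M\geq M_{d-\rm rigid}\}\quad\text{and}\quad \bigl\{\delta(G)\geq d\bigr\}=\{M\geq M_d\}.
\]
Because $M_{d-\rm rigid}\geq M_d$ holds deterministically, the symmetric difference of these two events is contained in $\{M_{d-\rm rigid}> M_d\}$, which by Theorem~\ref{thm:1} has probability $o(1)$; consequently
\[
\P\bigl[\cG(n,p)\text{ is }d\text{-rigid}\bigr]=\P\bigl[\delta(\cG(n,p))\geq d\bigr]+o(1).
\]

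Next, I would invoke the classical sharp-threshold theorem for the minimum degree of $\cG(n,p)$ (see~\cite{Bollobas-book-RandomGraphs}): writing $np=\log n+(d-1)\log\log n+c_n$, the number $X_d$ of vertices of degree less than $d$ converges in distribution to $\Po(e^{-c}/(d-1)!)$ when $c_n\to c\in\R$, tends to $\infty$ in probability when $c_n\to\infty$, and tends to $0$ in probability when $c_n\to-\infty$. Combined with the display above (and using $\{\delta\geq d\}=\{X_d=0\}$), these three regimes yield exactly items~(i) and~(ii) as well as the limit $e^{-e^{-c}/(d-1)!}$.

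The whole deduction is a two-step black-box argument: Theorem~\ref{thm:1} does all of the geometric work, while the second step is entirely classical random-graph theory. There is no substantive obstacle beyond checking that the coupling behaves as described, which is immediate from the monotonicity of both properties in question.
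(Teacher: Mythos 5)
Your deduction is correct and is essentially the paper's own route: the paper derives the corollary directly from Theorem~\ref{thm:1} via the standard coupling/contiguity of $\cG(n,p)$ with the process $G(n,M)$ together with the classical Poisson limit for the number of vertices of degree below $d$ (cited from \cite{Bollobas-book-RandomGraphs}). Your explicit coupling and the observation that the symmetric difference of the two monotone events is contained in $\{M_{d-\rm rigid}>M_d\}$ is exactly the intended argument, just spelled out in more detail than the paper bothers to.
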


\begin{remark}
In fact Theorem~\ref{thm:1} holds for every \emph{abstract rigidity matroid}, introduced by  Graver~\cite{Graver:AbstractRigidity}, with exactly the same proof. Hence, also the same sharp threshold probability  holds in that generality. 
Indeed, we only use the following facts, that hold in every abstract $d$-rigidity matroid: 
\begin{enumerate}[(i)]
\item For every rigid graph $G=(V,E)$ with $|V|\ge d+1$ there holds:
    \begin{enumerate}
        \item $\delta(G)\geq d$, and 
        \item the rank of $G$ in the matroid equals $d|V|-\binom{d+1}{2}$.
    \end{enumerate}
    \item The complete graph on $d+2$ vertices minus an edge, $K_{d+2}^-$, is rigid.
    
\end{enumerate}
\end{remark}

\subsection*{Generic Global Rigidity} A $d$-dimensional framework $(G,\p)$ is called {\em globally} $d$-rigid if every embedding of $V$ in $\R^d$ that realizes the pairwise distances $\|\p(v)-\p(u)\|$ for every edge $uv\in E$ is obtained from $\p$ by an isometry of $\R^d$. We say that a graph $G$ is globally $d$-rigid if $(G,\p)$ is globally $d$-rigid for some generic embedding $\p$. Fundamental results in the theory of global rigidity due to Connelly ~\cite{Connelly:global} and and Gortler, Healy and Thurston ~\cite{GHT}, assert that global $d$-rigidity of a graph $G$ is equivalent to the property that $(G,\p)$ is globally $d$-rigid for every generic $\p$. We refer the reader to ~\cite{tanigawa2015sufficient, jordan2017global} and the references therein for a detailed introduction to global rigidity, the differences from rigidity and infinitesimal rigidity, as well as a review of the research highlights in this topic in recent decades. 

It turns out that global $d$-rigidity is a stronger property than $d$-rigidity. For instance, the minimum degree of a globally $d$-rigid graph is at least $d+1$ (and not $d$), since it is possible to reflect a vertex of degree $d$ or less over an affine hyperplane spanned by its neighbors --- yielding a different embedding of the vertices which realizes the same distances between the adjacent pairs (see \cite{hendrickson1992conditions}). In addition, it is easy to see that global $d$-rigidity is a monotone property. In the $d=1$ case, global $1$-rigidity is equivalent to $2$-vertex-connectivity ~\cite{jordan2017global}. It is known that in the evolution of random graphs, a.a.s.\, $G(n,M)$ becomes $2$-vertex-connected at the very moment the last vertex of degree $1$ disappears~\cite{Bollobas-book-RandomGraphs}.

Building on a sufficient condition for global rigidity discovered by Tanigawa~\cite{tanigawa2015sufficient}, Jord\'an proved that every $(d+1)$-rigid graph is globally $d$-rigid ~\cite{jordan2017Aglobal}. By letting \[
M_{d-\rm GR}:=\min\{M: G(n,M)\text{ is globally $d$-rigid}\},
\]
we have that $
M_{(d+1)-\rm rigid}\ge M_{d-\rm GR}\ge M_{d+1}$ holds deterministically. Therefore, applying Theorem \ref{thm:1} for $(d+1)$-rigidity, yields, for every fixed $d\ge 1$, a hitting-time result for global $d$-rigidity: $G(n,M)$ becomes globally $d$-rigid at the very moment its minimum degree becomes $d+1$. 
 
\begin{corollary}\label{thm:global}
    For every $d\ge 1$, a.a.s.\, $M_{d-\rm GR}= M_{d+1}$.
\end{corollary}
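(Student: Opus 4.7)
The plan is to close the sandwich of deterministic inequalities highlighted in the paragraph preceding the statement, namely $M_{(d+1)-\rm rigid}\ge M_{d-\rm GR}\ge M_{d+1}$, and then apply Theorem~\ref{thm:1} in dimension $d+1$ to collapse it. The point is that all the probabilistic work has been absorbed into Theorem~\ref{thm:1}; what remains are two purely structural containments from the global-rigidity literature, each of which I would verify separately.

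First, I would justify the upper bound $M_{d-\rm GR}\le M_{(d+1)-\rm rigid}$ by invoking Jord\'an's theorem~\cite{jordan2017Aglobal} that every $(d+1)$-rigid graph is globally $d$-rigid. Since the evolving graph $G(n,M)$ is already globally $d$-rigid at the first step where it becomes $(d+1)$-rigid, and global $d$-rigidity is monotone, the hitting time $M_{d-\rm GR}$ cannot exceed $M_{(d+1)-\rm rigid}$ pointwise on the probability space.

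Next, I would justify the lower bound $M_{d-\rm GR}\ge M_{d+1}$ using Hendrickson's necessary condition~\cite{hendrickson1992conditions}: for $n\ge d+2$, any globally $d$-rigid graph on $n$ vertices has minimum degree at least $d+1$, because otherwise a vertex of degree $\le d$ could be reflected across the affine hyperplane spanned by its neighbors in a generic embedding to produce a non-congruent realization of the same edge lengths. Consequently, $\delta(G(n,M_{d-\rm GR}))\ge d+1$ deterministically for $n$ large, which is exactly the statement $M_{d-\rm GR}\ge M_{d+1}$.

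Finally, I would apply Theorem~\ref{thm:1} with $d$ replaced by $d+1$, which yields $M_{(d+1)-\rm rigid}=M_{d+1}$ a.a.s. Combined with the two deterministic inequalities above, this forces $M_{d+1}\le M_{d-\rm GR}\le M_{(d+1)-\rm rigid}=M_{d+1}$ a.a.s., so the entire sandwich collapses and $M_{d-\rm GR}=M_{d+1}$ a.a.s., as required. There is no real obstacle in this argument: the delicate analysis of degree-$d$ vertices and the expansion/giant-component machinery developed in Sections~\ref{sec:Pr}--\ref{sec:2.3-Expansion} is exactly what makes the $(d+1)$-rigid case of Theorem~\ref{thm:1} strong enough to drive this corollary for free.
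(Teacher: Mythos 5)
Your argument is correct and is exactly the paper's own route: the deterministic sandwich $M_{(d+1)-\rm rigid}\ge M_{d-\rm GR}\ge M_{d+1}$ (Jord\'an's theorem for the upper bound, the minimum-degree necessary condition for the lower bound), collapsed by applying Theorem~\ref{thm:1} in dimension $d+1$. Nothing is missing.
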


Outline: In Section~\ref{sec:Pr} we prove Theorem~\ref{thm:1}, based on two propositions; the first shows that long before $M_d$ edges are inserted, the closure of the graph (in the rigidity matroid) already contains a giant clique, see Section~\ref{sec:2.2-Giant}, and the second shows an expansion property when $M_d$ edges are inserted, see Section~\ref{sec:2.3-Expansion}.
We end in Section~\ref{sec:Open} with open problems concerning the threshold for the appearance of a giant $d$-rigid component. 

\section{Proof of the main Theorem~\ref{thm:1}}\label{sec:Pr}

Let $n\ge d+1$, $G=([n], E)$ and $\p:[n]\to \R^d$
be a generic embedding of its vertices.  
For every $1\le x<y\le n$ consider the vector $\bv_{xy}$, defined as in Section \ref{sec:Intro}
(namely, this is the row vector of the edge $xy\in\binom{[n]}{2}$ in the rigidity matrix of the complete graph $K_n$).
We define the \emph{$d$-rigidity closure} of $G$ by 
\[
\sC_d(G) = \left\{ xy \in\binom{[n]}{2} ~:~\bv_{xy} \in \mbox{span}_{\R}(\bv_e~:~e\in G)\right\}. 
\]
In other words, $xy\in\sC_d(G)$ if and only if $\mbox{rank}(R(G,\p))=\mbox{rank}(R(G\cup\{xy\},\p)),$ hence $\sC_d(G)$ does not depend on the embedding $\p$ provided it is generic.
In addition, recall that $G$ is $d$-rigid if the rank of the corresponding rigidity matrix $R(G,\p)$ attains the maximal value of $dn-\binom{d+1}2$. This is equivalent, by standard linear algebra and the fact that $K_n$ is $d$-rigid, to the property that the rank of the rigidity matrix does not increase by the addition of any edge to the graph. Therefore, a graph $G$ is $d$-rigid if and only if $\sC_d(G)$ is the complete graph. 

In particular, as $K_{d+2}^-$ is $d$-rigid (since $K_{d+2}$ is $d$-rigid and the number of its edges exceeds the rank of the rigidity matrix by $1$), we have that $\sC_d(K_{d+2}^{-})=K_{d+2}$, which yields the following useful observation.
\begin{observation}\label{fact:K_minus}
Suppose that $\sC_d(G)$ contains a $d$-clique on the vertex set $\{u_1,...,u_d\}$ and in addition there exists a pair of distinct vertices $x,y\not\in\{u_1,\ldots,u_d\}$, such that each $u_i$ is adjacent to both $x$ and $y$. Then $xy$ is an edge in $\sC_d(G)$.
\end{observation}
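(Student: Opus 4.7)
The plan is to recognize that the hypothesis exhibits a copy of $K_{d+2}^-$ sitting inside $\sC_d(G)$ whose missing edge is precisely $xy$, and then invoke the $d$-rigidity of $K_{d+2}^-$ together with the closure properties of $\sC_d$.

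First, I would set $S=\{u_1,\ldots,u_d,x,y\}$, a set of $d+2$ distinct vertices. By assumption, all $\binom{d}{2}$ edges among $u_1,\ldots,u_d$ already belong to $\sC_d(G)$, and the $2d$ edges $u_ix$, $u_iy$ (for $1\le i\le d$) are edges of $G$, hence also in $\sC_d(G)$. The union of these edges is exactly $K_{d+2}^-$ on the vertex set $S$, with $xy$ as the missing edge.

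Next, I would recall the observation made just before the statement: $K_{d+2}^-$ is $d$-rigid, and therefore $\sC_d(K_{d+2}^-)=K_{d+2}$. In particular, $xy\in\sC_d(K_{d+2}^-)$. Here I would note the (essentially immediate) fact that the closure operator localizes correctly to subsets of vertices, because the rows of the rigidity matrix indexed by edges inside $S$ have support only on the $d|S|$ coordinates of vertices in $S$; this means that $\sC_d$ computed in the ambient matroid on $[n]$ agrees on pairs within $S$ with the closure computed in the rigidity matroid of $K_S$.

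Finally, since $\sC_d$ is a matroid closure operator — in particular monotone and idempotent — and all edges of the copy of $K_{d+2}^-$ lie in $\sC_d(G)$, we conclude
\[
xy\in\sC_d(K_{d+2}^-)\subseteq\sC_d(\sC_d(G))=\sC_d(G),
\]
as desired. There is no real obstacle here; the only point that warrants a brief mention is the localization of $\sC_d$ to the vertex subset $S$, which follows directly from the block structure of the rigidity matrix.
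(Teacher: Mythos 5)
Your proof is correct and takes essentially the same route as the paper, which deduces the observation directly from the fact that $K_{d+2}^-$ is $d$-rigid, i.e.\ $\sC_d(K_{d+2}^-)=K_{d+2}$; you simply spell out the two points the paper leaves implicit, namely the localization of the closure to the vertex set $S$ (via the block support of the rigidity-matrix rows) and the monotonicity and idempotence of $\sC_d$. One cosmetic remark: the hypothesis is applied in the proof of Proposition~\ref{lem:component} with the edges $u_ix,u_iy$ lying only in $\sC_d(G)$ rather than in $G$, but your argument uses nothing beyond their membership in $\sC_d(G)$, so it covers that case verbatim.
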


We derive Theorem \ref{thm:1} from the two following propositions. The first one, Proposition~\ref{lem:component}, establishes the existence of a giant clique in $\sC_d(G(n,M))$ for a sufficiently large $M$ that is much smaller than $M_d$. The second, Proposition~\ref{lem:expansion}, describes a structural feature of $G(n,M_d)$ that yields a bootstrap argument: If $\sC_d(G(n,M_d))$ contains a giant clique then the entire graph $\sC_d(G(n,M_d))$ is a clique whence $G(n,M_d)$ is $d$-rigid.

\begin{proposition}(Giant clique in the closure)\label{lem:component}
The graph $\sC_d(G(n,10d^2n))$ a.a.s. contains a clique of at least $5n/9$ vertices.
\end{proposition}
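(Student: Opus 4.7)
My plan rests on a bootstrap-type extension rule for $\sC_d(G)$ that I would first derive from Observation~\ref{fact:K_minus}: \emph{if $S\subseteq[n]$ is a clique in $\sC_d(G)$ with $|S|\ge d$ and $v\in[n]\setminus S$ has at least $d$ $G$-neighbors in $S$, then $S\cup\{v\}$ is still a clique in $\sC_d(G)$.} To see this, fix any $d$ of these $G$-neighbors as $\{u_1,\dots,u_d\}\subseteq S$; for every $y\in S\setminus\{u_1,\dots,u_d\}$, both $v$ and $y$ are adjacent in $\sC_d(G)$ to each $u_i$ (the edges $vu_i$ lie in $G\subseteq\sC_d(G)$, and the edges $yu_i$ lie in $\sC_d(G)$ by the clique property of $S$), so Observation~\ref{fact:K_minus} yields $vy\in\sC_d(G)$. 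An immediate corollary is a \emph{merging rule}: if $A,B$ are $\sC_d(G)$-cliques with $|A\cap B|\ge d$, then so is $A\cup B$.

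Given these rules, the task reduces to producing a linear-size seed that is a $\sC_d(G)$-clique and then growing it by $d$-neighbor extensions. I would use a two-round exposure $G=G_1\sqcup G_2$, splitting off $|E(G_2)|=\Theta(dn)$ fresh edges for the growth phase. For small $d$ the seed can be initialized at a $(d{+}1)$-clique of $G_1$, present a.a.s., and grown by iterated extension inside $G_1$. For large $d$ no small rigid subgraph is present in expectation---at $p=\Theta(1/n)$ the expected number of copies of any $d$-rigid graph on more than $(d{+}1)/2$ vertices is $o(1)$---so the seed must instead be assembled non-locally by iteratively merging small $\sC_d$-cliques sharing at least $d$ vertices, in the spirit of the giant-cluster analysis of~\cite{LuzP} for random $2$-complexes.

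The growth phase is then a reasonably standard analysis of $d$-neighbor bootstrap percolation driven by the independent edges of $G_2$: writing $c_2=2|E(G_2)|/n$ and $f(\rho)=\P(\Po(c_2\rho)\ge d)$, choosing $c_2$ large enough makes $f(\rho)>\rho$ throughout the range $\rho\in[\rho_0,5/9]$ from the seed density $\rho_0$ up to $5/9$. A differential-equation (or branching-process) approximation combined with standard concentration inequalities on the edges of $G_2$ then gives that the active set a.a.s.\ grows past $5n/9$.

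\textbf{Main obstacle.} The delicate part is the seed construction for large $d$: since no explicit small rigid substructure is available at this density, the seed must be assembled by iteratively fusing overlapping local fragments into a linear-size cluster, closely modelled on the giant-cluster argument of~\cite{LuzP}.
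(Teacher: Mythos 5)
Your preparatory rules are fine: the extension rule and the merging rule do follow from Observation~\ref{fact:K_minus} (together with idempotence of the closure), and indeed the same mechanism is what the paper uses to bootstrap a giant clique into the whole vertex set. The genuine gap is the seed, which is exactly where the content of the proposition lies. For $d\ge 3$ you give no mechanism at all: "iteratively merging small $\sC_d$-cliques sharing at least $d$ vertices" presupposes that cliques of size at least $d+1$ exist in $\sC_d(G(n,10d^2n))$, which you have not shown and which is not obviously easier than the proposition itself (a $(d+2)$-clique in the closure coming from an actual $K_{d+2}^-$ in $G$ is a.a.s.\ absent at this density, and closure edges arising from large rigidity circuits are precisely what needs to be controlled). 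Worse, your own growth criterion rules out the constant-size seed you propose for small $d$: with $c_2=O(d^2)$ and threshold $d\ge 2$, $f(\rho)=\P(\Po(c_2\rho)\ge d)=\Theta((c_2\rho)^d)\ll\rho$ for all $\rho=o(1)$, so $f(\rho)>\rho$ can only hold once $\rho$ is bounded below by a positive constant; equivalently, the expected number of vertices with $d$ neighbours in a bounded-size set is $O(n^{1-d})=o(1)$, so percolation from a $(d+1)$-clique stalls immediately. Hence under your plan the seed must already be a clique of linear size in the closure --- which is essentially the statement to be proved --- and the "main obstacle" you flag is not a technical loose end but the whole theorem.

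For comparison, the paper needs no seed-and-grow dynamics. Its Lemma~\ref{lem:closure} shows by a coupling/rank-counting argument that $\sC_d(G(n,10d^2n))$ a.a.s.\ contains at least $(1-(9d)^{-1})\binom n2$ edges: when the $i$-th uniform edge falls outside the current closure the rank of the rigidity matrix strictly increases, which can happen at most $dn-\binom{d+1}{2}$ times, and a binomial tail bound then forces the closure to be dense. From density the giant clique follows deterministically: take $A$ to be the vertices whose closure-degree is at least $(1-(4d)^{-1})(n-1)$ (so $|A|\ge 5n/9$ by double counting), and for any $v_1,v_2\in A$ greedily pick $v_3,\dots,v_{d+2}\in A$ each adjacent in the closure to all previous ones; Observation~\ref{fact:K_minus} then puts $v_1v_2$ in the closure. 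If you want to salvage your route, this density lemma is the missing ingredient you would have to supply; note also that the paper explicitly remarks, and Example~\ref{example:compVSclique} illustrates, that its argument does not produce a giant rigid component and that doing so at density $\Theta(n)$ edges is left open, so a seed assembled from genuinely rigid local fragments is likely out of reach with current techniques.
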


\begin{proposition}(Expansion)\label{lem:expansion}
The graph $G(n,M_d)$ a.a.s. has the property that every subset $B$ of size $1\le |B| \le n/2$ contains a vertex $v$ with at least $d$ neighbors in $[n]\setminus B$.
\end{proposition}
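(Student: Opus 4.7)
The plan is a first-moment (union-bound) argument, exploiting both the out-degree constraint defining ``bad'' sets and the min-degree condition of $G(n,M_d)$. The first step is to reduce to $G(n,p)$: since $M_d$ concentrates in a narrow window around $\tfrac{1}{2}n(\log n+(d-1)\log\log n)$ and $G(n,M)$ and $G(n,p)$ are contiguous for $p=M/\binom{n}{2}$, it suffices to bound the expected number of bad subsets in $G(n,p)$ at $p$ slightly above threshold, intersected with the event $\delta(G)\ge d$.

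The case $|B|=1$ is automatic since $\delta(G(n,M_d))\ge d$. For $|B|=k\ge 2$, the crucial observation is that if every $v\in B$ has at most $d-1$ neighbors outside $B$ while still having total degree at least $d$, then $\deg_B(v)\ge 1$ for every $v\in B$, which in particular forces $e(G[B])\ge k/2$. In $G(n,p)$, inside- and outside-edges of $B$ involve disjoint edge sets and so are independent, giving
\[
\Pr_{G(n,p)}\bigl[B\text{ bad and }\delta(G[B])\ge 1\bigr]\;\le\;\bigl[\Pr(\bin(n{-}k,p)\le d{-}1)\bigr]^{k}\,\Pr\!\bigl[\bin(\tbinom{k}{2},p)\ge k/2\bigr].
\]
With $np\sim\log n$, Poisson approximation bounds the first factor by $(C/n)^{k}e^{O(k^{2}p)}$ for a constant $C=C(d)$, and a standard binomial tail bounds the second by $(ekp)^{k/2}$. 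Multiplying by $\binom{n}{k}\le(en/k)^{k}$ and summing over $2\le k\le n/2$, the total expected count is $O(\log n/n)=o(1)$, dominated by the $k=2$ term (a pair of adjacent degree-$d$ vertices).

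The main obstacle I anticipate is covering the full range of $|B|$ uniformly. The case $k=2$ is tight: without the forced inside edge $\deg_B(v)\ge 1$, the naive first moment is $\Theta(1)$, so the min-degree-induced constraint is essential. At the other extreme, for $k\ge n/\log n$ the inside-edge bound becomes trivial; in that regime I would switch to a Chernoff tail for the crossing-edge count, $\Pr[e(B,[n]\setminus B)\le k(d-1)]\le\exp(-\Omega(k\log n))$, which easily dominates $\binom{n}{k}\le 2^{n}$. Intermediate $k$ can be handled by whichever of the two bounds is stronger there, with straightforward logarithmic bookkeeping. Finally, transferring from $G(n,p)$ to $G(n,M_d)$ is routine given the $o(\log n/n)$ slack: the standard contiguity incurs only an $O(\sqrt{n\log n})$ factor loss, which the $O(\log n/n)$ first-moment bound absorbs comfortably, and concentration of $M_d$ in the narrow window finishes the argument.
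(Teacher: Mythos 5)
Your core first-moment computation is essentially the paper's: you exploit exactly the same two ingredients (the probability that a vertex has at most $d-1$ neighbors outside $B$, raised to the power $|B|$, times the probability of an edge inside $B$ forced by the minimum-degree condition, the two factors being independent since they involve disjoint edge sets), and the dominant $k=2$ term of order $\log n/n$ matches the paper's bookkeeping. The strengthening $e(G[B])\ge k/2$ is fine but unnecessary; the paper only uses ``$B$ is not independent'' and only for $k\le(\log\log n)^2$, dropping the inside factor entirely for larger $k$.

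The genuine gap is the transfer from $G(n,p)$ back to the hitting-time graph $G(n,M_d)$, which you dismiss as routine. The bad event (``some $B$ in which every vertex has at most $d-1$ outside neighbors, while $\delta\ge d$'') is \emph{not monotone}, and $M_d$ is a random stopping time that fluctuates over a window of $\omega(n)$ values of $M$ (the critical window for minimum degree $d$ has width $\Theta(1/n)$ in $p$, i.e.\ $\Theta(n)$ edges, and a.a.s.\ containment needs a slightly wider window). So you cannot reduce to a single $G(n,M)$ or $G(n,p)$: either you condition on $\{M_d=M\}$, an event of probability $O(1/n)$, which destroys your $O(\log n/n)$ bound, or you union-bound over the $\omega(n)$ values of $M$ in the window, and then even after Pittel's $O(\sqrt{n\log n})$ transfer your per-$M$ bound $O(\log^{3/2}n/\sqrt n)$ summed over the window diverges. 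Note also that you cannot prove the expansion property at the lower end of the window and push it up by monotonicity, since before time $M_d$ any vertex of degree less than $d$ is itself a bad singleton. The paper's fix is a two-sided sandwich coupling $G_-\subset G(n,M_d)\subset G_+$ with $p_\pm=(\log n+(d-1)\log\log n\pm\log\log\log n)/n$: the ``few outside neighbors'' half of the bad event is inherited downward by $G_-$, the ``edge inside $B$'' half is inherited upward by $G_+$, and since the two halves live on disjoint edge sets the product bound still applies to the coupled pair. This converts the statement about the random time $M_d$ into a single first-moment estimate for $(G_-,G_+)$, which is exactly your computation; without this (or an equivalent device) your final step does not go through.
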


We are now ready to prove Theorem~\ref{thm:1}, given Propositions~\ref{lem:component} and \ref{lem:expansion}; the latter are proved in Sections~\ref{sec:2.2-Giant} and~\ref{sec:2.3-Expansion} resp.
\begin{proof}[Proof of Theorem \ref{thm:1}]
We need to prove that $M_{d-\rm rigid}\le M_d$, i.e., that $G(n,M_d)$ is a.a.s.\, $d$-rigid.
We condition on the event in Proposition~\ref{lem:component}, the event in Proposition~\ref{lem:expansion}, and the event that $M_d\ge 10d^2n$, that all occur a.a.s.\footnote{In fact, it is known that a.a.s. $M_d = (1/2+o(1))n\log n$; see \cite{Bollobas-book-RandomGraphs}.}
Using the monotonicity of the closure, we derive that 
the largest clique $A$ in $\sC_d(G(n,M_d))$ is of size $|A|\ge 5n/9$.

We claim that $A=[n]$.
Indeed, otherwise, by Proposition~\ref{lem:expansion}, there is a vertex $v \in B:=[n]\setminus A$ with at least $d$ neighbors $u_1,...,u_d$ in $A$. In such a case, for every $x\in A \setminus \{u_1,...,u_d\}$, the vertices $u_1,...,u_d$ form a clique that is connected to both $x$ and $v$ (in the closure), hence $vx\in\sC_d(G(n,M_d))$ by Observation \ref{fact:K_minus}. Consequently, $v$ is connected to all the vertices in $A$, hence $A\cup\{v\}$ induces a clique in $\sC_d(G(n,M_d))$ --- contradicting the maximality of $A$. Thus $B$ must be the empty set.
We conclude that $A=[n]$, hence $\sC_d(G(n,M_d))$ is the complete graph, thereby $G(n,M_d)$ is $d$-rigid, as needed. 
\end{proof}

\section{Proof of Proposition~\ref{lem:component}}\label{sec:2.2-Giant}
For the proof we need the following lemma.
\begin{lemma}\label{lem:closure}
Fix real numbers $\delta,c$ such that  $0<\delta<1$ and $c\delta>d$. Then a.a.s.\,,
$$|\sC_d(G(n,cn))| 
\ge \left(1-\delta\right)\binom {n}{2}\,.
$$
\end{lemma}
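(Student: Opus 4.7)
The plan is to produce, a.a.s.\ in $G := G(n, cn)$, a vertex set $T \subseteq [n]$ with $|T| \ge (1 - \delta/2) n$ such that $G[T]$ is $d$-rigid; since this yields $\sC_d(G) \supseteq \binom{T}{2}$, it gives $|\sC_d(G)| \ge \binom{|T|}{2} \ge (1-\delta)\binom{n}{2}$ for $n$ large.

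I would construct $T$ by a greedy absorption procedure. Starting from a $d$-rigid subgraph $H_0 \subseteq G$ on a vertex set $T_0$, iteratively absorb into $T$ any $v \in [n]\setminus T$ with at least $d$ $G$-neighbors in the current $T$. By iterating Observation~\ref{fact:K_minus} (equivalently, the $0$-extension for $d$-rigidity), $G[T]$ remains $d$-rigid at each step, so the $d$-rigidity closure of $G[T]$ is always the complete graph on $T$. The process halts at some $T$ for which every $v \in B := [n]\setminus T$ has fewer than $d$ $G$-neighbors in $T$, equivalently $e_G(B, T) < d|B|$.

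For the termination analysis, I would concentrate $e_G(B, T)$ around its mean. For a fixed $B \subseteq [n]$ with $|B| = \beta n$ in $G(n,cn)$, $\E[e_G(B, T)] = 2c\beta(1-\beta)n + O(1)$, and the hypothesis $c\delta > d$ gives $2c(1-\beta) > d$ for all $\beta \le 1 - \delta/2$, leaving a linear slack above the threshold $d\beta n$. A Chernoff-type tail bound on $e_G(B, T)$ combined with a union bound over the at most $2^n$ choices of $B$ then shows that, a.a.s., no $B \subseteq [n]$ with $\delta n/2 \le |B| \le (1 - \delta/2)n$ satisfies $e_G(B, T) < d|B|$. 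Provided the absorption drives $|T|$ past $\delta n/2$ at some step, the terminal $|T|$ is forced to exceed $(1 - \delta/2) n$, as required.

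The principal obstacle is producing the $d$-rigid seed $T_0$ when $d \ge 3$. For $d \le 2$ a triangle or an edge suffices and appears a.a.s.\ for $c$ large enough. But for $d \ge 3$, any $d$-rigid graph on $k \ge d+1$ vertices has at least $dk - \binom{d+1}{2}$ edges, so its expected count in $G(n,cn)$ behaves as $\binom{n}{k}(2c/n)^{dk - \binom{d+1}{2}} = \Theta\bigl(n^{k(1-d) + \binom{d+1}{2}}\bigr)$, which is $o(1)$ for every $k \ge d+1$ once $d \ge 3$. Hence no bounded-size $d$-rigid subgraph appears a.a.s.\ in $G(n, cn)$ for fixed $c$, and one must exhibit a super-constant (likely linear) sized $d$-rigid induced subgraph via a substantially more delicate probabilistic argument, in the spirit of the approach in~\cite{LuzP}. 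This is the crux of the proof and the main technical hurdle beyond the Chernoff/union-bound reasoning of the absorption step.
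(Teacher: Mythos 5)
Your proposal is not a proof: the step you yourself flag as ``the crux'' --- exhibiting a $d$-rigid seed, which (as you correctly compute) must have super-constant, in fact linear, size --- is exactly the part that is missing, and it is not a routine technicality. Establishing that $G(n,cn)$ a.a.s.\ contains a linear-sized induced $d$-rigid subgraph is essentially the ``giant rigid component'' problem that this paper explicitly does not resolve (see the remark following the proof of Proposition~\ref{lem:component} and Conjecture~\ref{conj:giant}); the paper's argument is designed precisely to avoid ever producing a rigid induced subgraph. There is also a secondary gap in the absorption analysis: your Chernoff/union bound only rules out stalling when $\delta n/2\le |B|\le (1-\delta/2)n$, because for $|B|=\beta n$ with $\beta$ close to $1$ the slack condition $2c(1-\beta)>d$ fails, so the process can genuinely stall while $T$ is still small. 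Hence you need the seed itself to have size of order $\delta n/2$, which makes the missing step even harder, and note that your target conclusion ($G[T]$ rigid for $|T|\ge(1-\delta/2)n$) is far stronger than what the lemma asserts, which is only a lower bound on the number of edges of $\sC_d(G)$, not a clique in it.

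The paper's proof is much softer and bypasses all of this. It couples the evolution $(G(n,i))_{i\le cn}$ with i.i.d.\ uniforms $r_1,\dots,r_{cn}$ so that at step $i$ the new edge is drawn from outside $\sC_d(G(n,i-1))$ exactly when $r_i$ falls below the current proportion of non-closure pairs among the remaining non-edges. Each such step strictly increases the rank of the rigidity matrix, which can happen at most $dn-\binom{d+1}{2}$ times in total. Consequently, if $|\sC_d(G(n,cn))|<(1-\delta)\binom n2$ then (by monotonicity of the closure) at most $dn-\binom{d+1}{2}$ of the $r_i$ are below $\delta$, an event of probability $\mathbb P\bigl(\bin(cn,\delta)\le dn-\binom{d+1}{2}\bigr)\to 0$ since $c\delta>d$. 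No rigid subgraph, seed, or expansion estimate is needed for this lemma; the only rigidity input is the rank bound $dn-\binom{d+1}{2}$. I recommend replacing your strategy with an argument of this type, or at minimum recognizing that your current plan requires solving a substantially harder problem than the statement at hand.
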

\begin{proof}
Let $1 \le M \le \binom n2$ be an integer. 
We  construct a coupling $\cD$ of pairs $(r,G)$ such that $r\sim \mathrm{U}([0,1]^M)$, $G\sim G(n,M)$ and for every $(r,G)\sim\cD$, at most $dn-\binom{d+1}2$ indices $1\le i\le M$ satisfy $r_i \le 1-  |\sC_d(G)|/{\binom n2}.$

We first show how to derive the lemma given such a coupling where $M=cn$. For this purpose, we simply note that the event $|\sC_d(G)| < (1-\delta)\binom n2$ is contained in the event that at most $dn-\binom{d+1}2$ of the $r_i$'s are smaller than $\delta.$ In other words,
\begin{equation}\label{eq:binom_bound}
\mathbb P\left(|\sC_d(G)| < (1-\delta)\binom n2\right) \le 
\mathbb P\left(\bin(cn,\delta) \le dn - \binom{d+1}2\right) \longrightarrow 0
\end{equation}
as $n\longrightarrow\infty$, by the law of large numbers, using $c\delta > d$. 

To construct the coupling $\cD$, we sample $r_1,...,r_M$ i.i.d.\, uniformly from $[0,1]$, and use its randomness to sample $(G(n,i))_{i=1}^{M}$ as follows. In every step $i$, the edge $e_i$ that we add to $G(n,i-1)$ to create $G(n,i)$ is sampled uniformly from 
\[
     e_i\sim \binom{[n]}{2}
     \setminus\sC_d(G(n,i-1))\,,\,\,\,\mbox{~if~}\,
    r_i < \frac{\binom n2-|\sC_d(G(n,i-1))|}{\binom n2 - (i-1)},
 \]
    and $e_i\sim\sC_d(G(n,i-1))\setminus G(n,i-1)$ otherwise.
In words, we first decide (using $r_i$) whether $e_i$ is sampled from the closure of $G(n,i-1)$ or not, and then we sample it uniformly. Note that the threshold we choose guarantees that $e_i$ is uniformly distributed in $\binom{[n]}2\setminus G(n,i-1)$. Therefore, $G:=G(n,M)$ is a uniform random $n$-vertex graph with $M$ edges. The key observation is that every time $e_i$ is chosen from outside the closure of $G(n,i-1)$ --- i.e., the condition
\[
r_i < \frac{\binom n2-|\sC_d(G(n,i-1))|}{\binom n2 - (i-1)}
\]
holds true --- the rank of the rigidity matrix of the obtained graph increases by one, and this can occur at most $dn-\binom{d+1}2$ times, which is the rank for the complete graph $K_n$. The construction is concluded by observing that 
\[
1-\frac{|\sC_d(G)|}{\binom n2} \le \frac{\binom n2-|\sC_d(G(n,i-1))|}{\binom n2 - (i-1)},
\]
using the monotonicity of the closure.
\end{proof}

\begin{proof}[Proof of Proposition~\ref{lem:component}]
Set $\delta = (9d)^{-1}$, $c=10d^2$ and $G=G(n,cn)$. By Lemma~\ref{lem:closure}, $\sC_d(G)$ has a.a.s.\,
at least $(1-(9d)^{-1})\binom n2$ edges.  

For every $v\in [n],$ let $d_v$ denote the degree of $v$ in $\sC_d(G)$. 
Write
$[n] = A\cup B$, a partition of the vertex set of $G$, where $B=\{v~:~d_v \le (1-(4d)^{-1})(n-1)\}$ and $A=[n]\setminus B$.
By double-counting the number of non-edges in $\sC_d(G)$ we find that
\[
|B|(4d)^{-1}(n-1) \le \sum_{v\in B}(n-1-d_v)\le 2\left(\binom n2-|\sC_d(G)|\right) \le 2(9d)^{-1}\binom n2\,.
\]
Thus $|B| \le 4n/9$, and so $|A|\ge 5n/9$.

We claim that $A$ induces a clique in $\sC_d(G)$. In fact, for every $v_1,v_2 \in A$ we find vertices $v_3,...,v_{d+2}$ in $A$ such that $v_i$ is connected to the vertices $v_1,...,v_{i-1}$ in the graph $\sC_d(G)$ for every $i=3,...,d+2$. By Observation \ref{fact:K_minus}, this implies that $v_1v_2\in\sC_d(G)$, hence $A$ is a clique as claimed. The construction of the vertices $v_i,~i=3,...,d+2$, is sequential and greedy. Namely, after the selection of $v_1,...,v_{i-1}$, a vertex in $[n]$ cannot be chosen as $v_i$ if it belongs to $B$ or if it is not adjacent to one of the previous vertices. Using the fact that $v_1,...,v_{i-1}\in A$, we find that the number of non-admissible choices for $v_i$ is at most
\[
|B|+\sum_{j=1}^{i-1}(n-d_{v_j}) \le 4n/9 + (d+1)(n/4d + 1-(4d)^{-1}) < n\,,
\]
for $n$ sufficiently large, and every $d\ge 1$. 
Hence there is at least one admissible choice, which proves our claim. This completes the proof of the proposition.
\end{proof}

\begin{remark}In Proposition~\ref{lem:component} we show that the closure of $G=G(n,cn)$ a.a.s.\, has  a giant clique if $c=10d^2$. Most reasonably, this occurs because $G$ itself contains a giant $d$-rigid component --- an inclusion-maximal vertex subset of positive density that induces a rigid subgraph --- but our proof does not guarantee that. In Example \ref{example:compVSclique} below we illustrate the difference between a giant clique in the closure and a rigid component by constructing a graph whose closure contains a giant clique $A$ despite the fact that the subgraph of $G$ induced by $A$ is empty, and all $d$-rigid components have constant size. We note that the graph that we construct for this illustration is very unlikely to appear as a subgraph of a random graph.
\end{remark}
\begin{example}\label{example:compVSclique}
Let $A$ be a finite set arbitrarily large, and $H$ a minimally $d$-rigid graph on $A$. For every edge $xy$ of $H$ let $G_{xy}$ be the graph $K_{d+2}^-$ with $xy$ its missing edge and its other vertices are not in $A$ and are unique to $G_{xy}$. Let $G=\bigcup_{xy\in E(H)}G_{xy}$. Then the closure is $\sC_d(G)=G\cup K_A$ and in particular it contains a clique of size $|A|$, and $|A|\ge |V(G)|/(d^2+1)$. In contrast, the maximal $d$-rigid components of $G$ are the $G_{xy}$'s, each has size $d+2$. 
\end{example}

\section{Proof of Proposition~\ref{lem:expansion}}\label{sec:2.3-Expansion}


Denote
\[
p_{\pm}=\frac{\log n +(d-1)\log\log n \pm \log\log\log n}{n}.
\]
There exists a standard coupling $(G_-,G(n,M_d),G_+)$ where $G_\pm\sim \cG(n,p_\pm)$, and the event $G_-\subset G(n,M_d)\subset G_+$ a.a.s.\, occurs. Indeed, let $(r_e~:~e\in\binom n2)$ be i.i.d.\, standard uniform random variables. To sample a $\cG(n,p)$ random graph we take all the edges $e$ such that $r_e < p$. Similarly, to sample the evolution $\{G(n,M)~:~0\le M\le\binom n2\}$ of random graphs we sort the edges in increasing order according to the values $r_e$. The coupling $(G_-,G(n,M_d),G_+)$ is obtained by using the same random sequence $(r_e)$ to sample all three graphs. In this coupling, the graph $G_-$ is always contained in $G_+$ and the $G(n,M_d)$ graph is sandwiched between them if $\delta(G_-)<d$ and $\delta(G_+)\ge d$, which a.a.s.\, occurs.

Recall that a vertex subset $B$ is called {\em independent} in a graph if no two vertices of $B$ are adjacent.

\begin{claim}\label{clm:Gpm}
Let $(G_-,G_+)$ be sampled as above. Then, a.a.s.\,  every subset $B$ of size $1\le |B| \le n/2$ satisfies that either
\begin{itemize}
    \item There exists a vertex in $B$ with at least $d$ neighbors outside of $B$ in the graph $G_-$, or
    \item $B$ is independent in the graph $G_+$.
\end{itemize}
\end{claim}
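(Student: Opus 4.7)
The plan is a first-moment argument. The case $|B|=1$ is trivial since a singleton is automatically independent in $G_+$, so we restrict attention to $2\le b\le n/2$ with $b:=|B|$. For a fixed $B$, let $\cA_B$ denote the event that every $v\in B$ has at most $d-1$ neighbors in $[n]\setminus B$ in $G_-$, and $\cE_B$ the event that $B$ spans at least one edge of $G_+$. The ``bad'' event is $\cA_B\cap \cE_B$.

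The event $\cA_B$ depends only on the edges across the cut $(B,[n]\setminus B)$, whereas $\cE_B$ depends only on edges inside $B$, so $\cA_B$ and $\cE_B$ are independent. Moreover, for distinct $v\in B$ the edges counting their outside neighbors lie in disjoint sets, so the degrees are independent; this yields $\P(\cA_B)=q^b$, where $q:=\P(\bin(n-b,p_-)\le d-1)$, and $\P(\cE_B)\le \binom{b}{2}p_+$.

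Since $(n-b)p_-\to\infty$, the tail $q$ is dominated by its $k=d-1$ term. Using $np_-=\log n+(d-1)\log\log n-\log\log\log n$ and $(1-p_-)^{n-b}\le e^{-(n-b)p_-}$, a direct estimate gives, for $b\le n/2$,
\[
q \;\le\; d\binom{n-b}{d-1}p_-^{d-1}(1-p_-)^{n-b-d+1} \;\le\; \frac{C_d \log\log n}{n}\,e^{bp_-},
\]
for some constant $C_d=C_d(d)>0$ (the factor $e^{bp_-}$ coming from $(1-p_-)^{-b}$, and $e^{-np_-}$ contributing $\log\log n/(n(\log n)^{d-1})$ which cancels the $(\log n)^{d-1}$ from $\binom{n-b}{d-1}p_-^{d-1}$). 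Combined with $\binom{n}{b}\le(en/b)^b$ and $\binom{b}{2}p_+\le b^2\log n/n$, the expected number of bad subsets is at most
\[
\sum_{b=2}^{n/2}\frac{b^2\log n}{n}\Bigl(\frac{eC_d\log\log n}{b}\Bigr)^{\!b}e^{b^2 p_-}.
\]

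Verifying that this sum is $o(1)$ is the main technical hurdle; the plan is to split it into $2\le b\le n^{1/3}$ and $n^{1/3}<b\le n/2$. In the first range, $b^2p_-=o(1)$, and by Stirling $(eC_d\log\log n/b)^b \lesssim (C_d\log\log n)^b/b!$; summing over $b$ yields $e^{C_d\log\log n}=(\log n)^{O(1)}$, which combined with the $(\log n)/n$ prefactor gives $o(1)$. In the second range a direct log-computation shows that the logarithm of the summand is very negative throughout: it is $\le -\Omega(n^{1/3}\log n)$ at $b=n^{1/3}$ and decreases in magnitude to $-\Theta(n\log n)$ near $b=n/2$, so each term is $\ll 1/n$ and the at most $n/2$ terms sum to $o(1)$. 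Markov's inequality then produces the claim, and the probabilistic content of the argument is entirely captured by the independence structure above together with the tail bound on $q$.
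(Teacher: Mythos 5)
Your proposal is correct and its skeleton is the same as the paper's: a first-moment bound over sets of each size $b$, independence of the cross-edge condition (in $G_-$) from the internal-edge condition (in $G_+$), the product bound $\P(\bin(n-b,p_-)\le d-1)^b$ coming from the disjointness of the relevant edge sets, and a binomial tail estimate of the form $\P(\bin(n-b,p_-)\le d-1)\le C_d\,\tfrac{\log\log n}{n}\,e^{bp_-}$, which is the paper's bound \eqref{eq:prob_bin} up to the constant. Where you genuinely differ is in how the union bound is executed: the paper splits at $b=(\log\log n)^2$, discards the $G_+$ factor for all larger $b$, and keeps the summand in the form $\bigl((e+o(1))\tfrac{\log\log n}{b}e^{bp_-}\bigr)^b$, using convexity of $b\mapsto e^{bp_-}/b$ to see that the base is at most $(e+o(1))/\log\log n$ on the whole range, giving a convergent geometric series; you split at $n^{1/3}$, keep the (for large $b$ vacuous) factor $b^2\log n/n$ throughout, and pull the exponential out as $e^{b^2p_-}$, which must then be beaten by the $b^{-b}$ decay.

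Two technical points, neither fatal. First, the inequality $(ex/b)^b\le x^b/b!$ is false as written (Stirling gives $b!\le e\sqrt{b}\,(b/e)^b$, so the two sides differ by a factor of order $\sqrt b$); use $(ex/b)^b=(ex)^b/b^b\le (ex)^b/b!$ instead, which still sums to $e^{ex}=(\log n)^{O(1)}$, so the small-$b$ range goes through as you intend. Second, for $n^{1/3}\le b\le n/2$ the assertion that the log of the summand is very negative ``throughout'' does need an actual argument: the crude comparison $b^2p_-\le\tfrac{b}{2}np_-\approx\tfrac{b}{2}\log n$ against $b\log b\ge\tfrac{b}{3}\log n$ is insufficient, and the log-summand is not monotone on this interval (for $d\ge 2$ it increases again near $b=n/2$). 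A clean fix: the log-summand is at most $-b\bigl(\log b-bp_-\bigr)+O(b\log\log\log n)+O(\log n)$, and $b\mapsto \log b-bp_-$ is concave, hence minimized at an endpoint of $[n^{1/3},n/2]$, where it equals $(\tfrac13-o(1))\log n$ and $(\tfrac12-o(1))\log n$ respectively; thus every term in this range is at most $e^{-\frac14 n^{1/3}\log n}$ for large $n$, and the range contributes $o(1)$. With that detail supplied your proof is complete; compared with the paper's convexity trick it is somewhat more computational, but it needs no new ideas and yields the same conclusion.
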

\begin{proof}
We show that the expected number of sets $B$ that violate both these conditions tends to $0$ as $n\longrightarrow\infty$, from which the claim follows by the first-moment method.

Note that the first condition in the claim depends on the edges between $B$ and its complement, whereas the second condition depends on the edges within $B$. Therefore, the two conditions are independent. In consequence, a set $B$  of size $b$ violates both conditions with probability 
\[
\mathbb P(\bin(n-b,p_-)\le d-1)^b\cdot
\mathbb P\left(\bin\left(\binom b2,p_+\right)\ge 1\right).
\]
We bound the first probability by computing, for every $0\le j\le d-1,$
\begin{align*}
\nonumber 
\mathbb P(\bin(n&-b,p_-)=j) =     
\binom {n-b}jp_-^j(1-p_-)^{n-b-j} \\ 
& \le (np_-)^j\exp{\left( -\log n -(d-1)\log\log n + \log\log\log n + (b+j)p_- \right)} \\
& \le (1+o(1))\frac{(\log n)^{j}\log\log n}{n(\log n)^{d-1}}e^{bp_-}.
\end{align*}
In the first inequality we used $\binom{n-b}j \le n^j$ and $1-p_-\le e^{-p_-}$. The second inequality is derived by $(np_-)^j \le (1+o(1))(\log n)^j$ and $jp_{-}=o(1)$ whence $e^{jp_-}=1+o(1)$. Therefore, using $\sum_{j=0}^{d-1} (\log n)^{j}=(1+o(1))(\log n)^{d-1}$, we find that
\begin{equation}
\mathbb P(\bin(n-b,p_-)\le d-1) \le
(1+o(1))\frac{\log\log n}{n}e^{bp_-}\,.
     \label{eq:prob_bin}
\end{equation}

Using the standard bound $\binom nb\le(en/b)^b$, \eqref{eq:prob_bin} suffices to prove the claim for all sets of size $(\log\log n)^2 \le b\le  n/2$ (even without using the condition on $G_+$). Indeed,
\begin{align}
\sum_{b=(\log\log n)^2}^{n/2}&
\binom nb \mathbb P(\bin(n-b,p_-)\le d-1)^b\nonumber\\
&\le
\sum_{b=(\log\log n)^2}^{n/2}
\left(\frac{en}{b} (1+o(1))\frac{\log\log n}{n}e^{bp_-}\right)^b
\nonumber\\
\label{eq:large_b}
&=  
\sum_{b=(\log\log n)^2}^{n/2}
\left((e+o(1))\frac{\log\log n}{b}e^{bp_-}\right)^b\,.
\end{align}
Note that the continuous function $b\mapsto e^{bp_-}/b$ is convex in the interval $b\in[(\log\log n)^2,n/2]$, hence its maximum is attained in one of the endpoints of the interval. 
\begin{itemize}
    \item If $b=n/2$ then $bp_-=(1/2+o(1))\log n$ whence $e^{bp_-}/b=n^{-1/2+o(1)}$.
    \item If $b=(\log \log n)^2$ then $bp_-=o(1)$ whence a larger value of $$e^{bp_-}/b=(1+o(1))(\log\log n)^{-2}$$ is obtained.

\end{itemize}
 
In conclusion, \eqref{eq:large_b} is bounded from above by the geometric sum
\[
\sum_{b=(\log\log n)^2}^{n/2}
\left(\frac{e+o(1)}{\log\log n}\right)^b\longrightarrow 0\,,
\]
as $n\longrightarrow\infty.$

We turn to consider sets of sizes $1\le b\le (\log\log n)^2$. Note that in such a case,
\[
\mathbb P\left(\bin\left(\binom b2,p_+\right)\ge 1\right)
\le
\binom b2p_+ = n^{-1+o(1)}.
\]
Additionally, we absorb the factor $e^{bp_-}$ in \eqref{eq:prob_bin} into the $1+o(1)$ factor to find that
\begin{align}
\nonumber
\sum_{b=1}^{(\log\log n)^2}
\binom nb &
\mathbb P(\bin(n-b,p_-)\le d-1)^b\cdot
\mathbb P\left(\bin\left(\binom b2,p_+\right)\ge 1\right)
\\
\label{eq:second_case}
&\le
\sum_{b=1}^{(\log\log n)^2}
\left((e+o(1))\frac{\log\log n}{b}\right)^b\cdot n^{-1+o(1)}.
\end{align}
By applying the bound 
$
\left((e+o(1))\log\log n/b\right)^b \le (\log n)^{1+o(1)}
$
(which is obtained by analysis of this function with respect to $b$) to each of the $(\log\log n)^2$ summands, \eqref{eq:second_case} is bounded from above by
\[
(\log\log n)^2\cdot (\log n)^{1+o(1)}\cdot n^{-1 +o(1)} \longrightarrow 0\,,
\]
as $n\longrightarrow\infty$, which concludes the proof of the claim.
\end{proof}

We derive Proposition~\ref{lem:expansion} as follows. Sample $(G_-,G(n,M_d),G_+)$ as above and condition on the event that  $G_-\subset G(n,M_d) \subset G_+$ which a.a.s.\, occurs.

Suppose that every vertex in a set $B$ has less than $d$ neighbors outside of $B$ in the graph $G(n,M_d)$. 
By monotonicity, no vertex in $B$ can have more than $d-1$ neighbors outside of $B$ in the graph $G_-$.
On the other hand, since the minimum degree in $G(n,M_d)$ is $d$, there must be an edge within $B$ in $G(n,M_d)$ --- which also appears in $G_+$, hence $B$ is not independent in $G_+$. By Claim \ref{clm:Gpm}, a.a.s. no such set $B$ of size $1\le |B| \le n/2$ exists.
\qed

\section{Open Problems}\label{sec:Open}
The emergence of a giant connected component in $\cG(n,p)$ is one of the most important phenomena in random graph theory. 
It is therefore natural to study the appearance of a giant $d$-rigid component --- an inclusion-maximal $d$-rigid induced subgraph of linear size --- in a $\cG(n,p)$ random graph. Here is a conjecture extending the $d=2$ case, which was established in~\cite{Kasiviswanathan-Moore-Theran:giant-2-rigid}, and studied further in~\cite{lelarge18}. 

First, we observe two necessary conditions the induced subgraph $G_A$ must satisfy if $A\subset [n]$ with at least $d+1$ vertices is a $d$-rigid component of $G\sim G(n,p)$: 
\begin{enumerate}[(i)]
    \item The minimum degree in $G_A$ is at least $d$, and \item There are at least $d|A|-\binom{d+1}{2}$ edges in $G_A$.
\end{enumerate}
    
In brief, the conjecture below states that in $\cG(n,p)$, these necessary conditions are also sufficient. I.e., if a subset satisfying these two conditions appears in $\cG(n,p)$ it a.a.s.\, induces a giant $d$-rigid component.

In more details, note that the second condition is closely related to the property of $d$-orientability --- the existence of an orientation of the edges of $G$ such that the maximum in-degree is at most $d$ --- since, by min-cut-max-flow duality, a subset $A$ inducing more than $d|A|$ edges is the only obstacle for $d$-orientability. The problem of orientability of random graphs was considered in ~\cite{orient1,orient2}, and a sharp threshold probability of the form $p=c_d/n$, where $c_d$ is an explicit constant, was determined. In fact, it was shown to coincide with the threshold probability for the property that the average degree of the $(d+1)$-core of $\cG(n,p)$ exceeds $2d$. Recall that the $k$-core of a graph is the largest induced subgraph of minimum degree at least $k$. It is known that for $k\ge 3$, once the $k$-core of $\cG(n,p)$ emerges it contains a positive fraction of the vertex set. We refer the reader to ~\cite{orient1,orient2,PSW} for explicit descriptions of the critical constant $c_d$ and the typical density of the $(d+1)$-core. We conjecture that at the same threshold probability $p=c_d/n$, the $(d+1)$-core $C$ of $G$ becomes $d$-rigid. To describe the entire $d$-rigid component, note that if $A$ induces a $d$-rigid subgraph and $v\notin A$ has $d$ neighbors in $A$ the $A\cup\{v\}$ induces a $d$-rigid subgraph as well. This leads us to the definition of the $((d+1)+d)$-core $\hat C$ of $G$, that is obtained from the $(d+1)$-core $C$ of $G$ by adding to it, for as long as possible, a vertex with at least $d$ neighbors in $\hat C$.

\begin{conj}\label{conj:giant}
Let $d\ge 2$, $c>0$ and $G\sim \cG(n,c/n)$. Then a.a.s.\,,
\begin{itemize}
    \item If $c<c_d$ then there is no $d$-rigid component in $G$ with more than $3$ vertices.
        \item If $c>c_d$ then $G$ contains a unique $d$-rigid component of positive density, which is comprised of the vertices of its $((d+1)+d)$-core.
\end{itemize}
\end{conj}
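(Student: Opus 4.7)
The plan is to follow the two-step template of Theorem~\ref{thm:1}, applied not to $\cG(n,p)$ in the regime $p \sim \log n/n$ but to the $(d+1)$-core $C$ of $G \sim \cG(n,c/n)$ with $c > c_d$. Conditional on $(|C|, e(C))$, the core $C$ is a uniform random graph with those parameters and minimum degree $\geq d+1$ \cite{PSW}; a.a.s.\ $|C|/n$ converges to a positive constant and $e(C)/|C|$ to a constant strictly above $d$ (this is precisely the content of $c_d$). The goal is to carry out a closure-plus-expansion argument on $C$ itself, then extend from $C$ to $\hat C$ via the $K_{d+2}^-$ mechanism of Observation~\ref{fact:K_minus}.

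The supercritical direction would proceed as follows. First, adapt Lemma~\ref{lem:closure} to the random core by running its edge-by-edge coupling inside the space of graphs of minimum degree $\geq d+1$: the key property that every edge sampled from outside the current closure raises the rigidity rank (which is capped by $d|C| - \binom{d+1}{2}$) is insensitive to the min-degree conditioning, so one obtains $|\sC_d(C)| \geq (1-\delta)\binom{|C|}{2}$ for an appropriate $\delta<1$. Second, imitating the proof of Proposition~\ref{lem:component}, one separates vertices of $C$ by their closure-degree to extract a clique $A \subseteq C$ of size $(1-o(1))|C|$ inside $\sC_d(C)$. Third, the min-degree-$(d+1)$ condition ensures every $v \in C \setminus A$ has at least $d$ neighbors in $A$, whence Observation~\ref{fact:K_minus} connects each such $v$ in $\sC_d(C)$ to all of $A$, so $C$ itself induces a clique in $\sC_d(C)$ and is $d$-rigid. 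Iterating the ``add a vertex with $d$ neighbors in the current rigid set'' operation then extends rigidity to $\hat C$. For the subcritical direction and the uniqueness claim in the supercritical direction, the natural strategy is a first-moment enumeration: below $c_d$ the graph is a.a.s.\ $d$-orientable \cite{orient1,orient2}, so candidate supports for a $d$-rigid subgraph---which must carry at least $d|A| - \binom{d+1}{2}$ edges on $|A|$ vertices---are rare enough that only sets of constant size can arise outside $\hat C$.

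The main obstacle is the critical regime $c = c_d^+$. Lemma~\ref{lem:closure} as stated only gives $|\sC_d(C)| \geq (1-\delta)\binom{|C|}{2}$ for $\delta > d|C|/e(C)$, which is close to $1$ when $c$ is close to $c_d$. Consequently the closure may miss a positive fraction of pairs, the giant-clique step of Proposition~\ref{lem:component} fails to cover nearly all of $C$, and the $(d+1)$-expansion cannot bridge the remaining gap. Circumventing this bottleneck---perhaps by a bootstrap that identifies a denser rigid kernel inside $C$ (using small subgraphs with large excess, or the local tree-like structure of the core) and grows it outward---appears to be the essential new ingredient. A secondary difficulty is the subcritical regime for $d \geq 3$: unlike the $d=2$ case handled in \cite{Kasiviswanathan-Moore-Theran:giant-2-rigid} via Laman's theorem, no combinatorial characterization of $d$-rigidity is available, so the enumeration of candidate rigid subgraphs must rely solely on the edge-count lower bound and the absence of dense subgraphs below $c_d$.
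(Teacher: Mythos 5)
First, note that the statement you are addressing is Conjecture~\ref{conj:giant}: the paper does not prove it, and explicitly says that establishing $d$-rigidity of the $(d+1)$-core in the supercritical regime ``seems to require new ideas.'' So there is no proof in the paper to match, and your proposal should be judged as a research plan. As such it correctly identifies the natural template (closure density, giant clique in $\sC_d$, expansion, then growth to the $((d+1)+d)$-core), but it has genuine gaps beyond the one you flag, and these gaps are exactly why the statement remains open.

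The most serious problem is the expansion step, and it fails for \emph{all} $c>c_d$, not only near the critical point. Your third step claims that minimum degree $d+1$ in $C$ ensures that every $v\in C\setminus A$ has at least $d$ neighbors in the clique $A\subseteq\sC_d(C)$; this is false unless $|C\setminus A|$ is smaller than the minimum degree, since a vertex of degree $d+1$ may have all its neighbors inside $C\setminus A$. What is really needed is an analogue of Proposition~\ref{lem:expansion}: every $B\subseteq C$ with $1\le|B|\le|C|/2$ contains a vertex with $d$ neighbors outside $B$. The paper's proof of that proposition leans crucially on $p\approx\log n/n$ (the union bound over sets $B$ only closes because $\P(\bin(n-b,p_-)\le d-1)$ is of order $\log\log n/n$), and at constant average degree the property itself is false: the core of $\cG(n,c/n)$ a.a.s.\ contains linear-size sets $B$ in which every vertex has fewer than $d$ neighbors outside $B$ (indeed $B=C$ minus a few vertices already behaves this way when the core has average degree close to $2(d+1)$). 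So rigidity of the core cannot be bootstrapped from a giant clique in $\sC_d(C)$ by this mechanism; this is the ``new idea'' the paper says is missing. Two further points: (a) the coupling in Lemma~\ref{lem:closure} requires each newly revealed edge to be uniform over the complement of the current graph, which breaks under conditioning on the core/minimum-degree structure, so even your first step needs a different argument (e.g.\ via the uniformity of the core given its degree sequence), and in any case, as you note, it is vacuous when $e(C)/|C|$ is close to $d$, i.e.\ for all $c$ close to $c_d$; and (b) the subcritical direction and the uniqueness claim are only sketched by a first-moment heuristic (essentially what the paper itself suggests), with the real difficulty for $d\ge 3$ being, as you say, the absence of any combinatorial characterization of $d$-rigidity to enumerate against. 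In short: the proposal is a reasonable outline of the intended attack, but it does not constitute a proof, and the step extending rigidity from a dense closure to all of $C$ is precisely where it breaks down.
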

In the subcritical regime, we believe that known methods from, e.g., ~\cite{orient1,orient2,lelarge2} can be utilized to show that no subset $A$ with more than $3$ vertices satisfy the necessary conditions (i),(ii) above. On the other hand, establishing the $d$-rigidity of the $(d+1)$-core in the supercritical regime seems to require new ideas.

One can sharpen this conjecture and propose that in the evolution of random graphs this phase transition occurs a.a.s.\, at the very moment that the $(d+1)$-core of $G$ exists, and the numbers $n',m'$ of its vertices and edges resp. satisfy $m'\ge dn'-\binom{d+1}{2}$ for the first time.

An additional extension of Conjecture \ref{conj:giant} regards the emergence of a giant global $d$-rigid component in $\cG(n,p)$. Clearly, this cannot occur before the emergence of a giant $d$-rigid component. In addition, the necessary condition for global $d$-rigidity --- of having minimum degree $d+1$ --- is satisfied by the $(d+1)$-core, which contains a positive fraction of the vertex set in the supercritical regime of the conjecture. Therefore, it is plausible to conjecture that if $p=c/n$ and $c>c_d$, then a.a.s.\, the $(d+1)$-core of $G$ constitutes a giant globally $d$-rigid component.





\textbf{Acknowledgements.}
We thank the anonymous referee for very helpful comments that greatly improved the presentation.
\bibliographystyle{abbrv}
\bibliography{rigid}

\end{document}